\documentclass[]{article}
\usepackage[hidelinks,draft=false]{hyperref}
\usepackage{times}
\usepackage{shuffle}
\usepackage{booktabs}
\usepackage{microtype}
\usepackage[final]{graphicx}
\usepackage{geometry}
\usepackage{comment}
\usepackage{mathtools} 
\usepackage{subfig}
\usepackage{float}
\usepackage{amsmath}
\makeatletter
\let\over\@@over
\let\atop\@@atop
\makeatother
\usepackage{amssymb}
\usepackage{mathrsfs}
\usepackage{mhequ}
\usepackage{./styles/mhenvs}
\usepackage{./styles/mhfig}
\usepackage{./styles/mhsymb}
\usepackage{hyperref}

\usepackage{times}
\usepackage{microtype}
\usepackage{amsmath}
\usepackage{amssymb}
\usepackage{tikz}
\usepackage{wasysym}
\usepackage{centernot}

\colorlet{symbols}{blue!90!black}

\def\1{\mathbf{1}}

\colorlet{testcolor}{green!60!black}

\usetikzlibrary{shapes.misc}
\usetikzlibrary{shapes.symbols}
\usetikzlibrary{decorations}
\usetikzlibrary{decorations.markings}

\makeatletter
\def\DeclareSymbol#1#2#3{\expandafter\gdef\csname MH@symb@#1\endcsname{\tikz[baseline=#2,scale=0.15]{#3}}%
\expandafter\gdef\csname MH@symb@#1s\endcsname{\scalebox{0.6}{\tikz[baseline=#2,scale=0.15]{#3}}}}
\def\<#1>{\csname MH@symb@#1\endcsname}
\makeatother

\DeclareSymbol{X}{-2.4}{\node[circle, fill, inner sep=1pt] {};}

\DeclareSymbol{1}{0}{\draw[white] (-.4,0) -- (.4,0); \draw (0,-.2)  -- (0,1.6) node[dot, fill, inner sep=0.7pt] {};}

\DeclareSymbol{sX}{-2.4}{\node[symb, circle, fill, inner sep=0.8pt] {};}

\newop{diag}

\newcommand\minus{%
  \setbox0=\hbox{-}%
  \vcenter{%
    \hrule width\wd0 height \the\fontdimen8\textfont3%
  }%
}

\def\E{\mathbb{E}}
\def\P{\mathbb{P}}
\def\/{\,\rule[-0.25em]{2pt}{1em}\,}

\def\T{\mathbb{T}}

\def\${|\!|\!|}

\def\dd{\mathrm{d}}

\def\R{\mathbb{R}}

\def\N{\mathbb{N}}

\newcommand{\norm}[1]{\left\lVert #1 \right\rVert}

\def\sC{{\mathscr C}}

\def\sT{{\mathscr T}}

\def\fs{{\mathfrak s}}

\usepackage{xcolor}
\newenvironment{red}%
  {\color{red}}%
  {}

\begin{document}  

\title{A priori bounds for the generalized parabolic Anderson model in the full subcritical regime}

\author{Salvador Esquivel, Felix Schwarzfischer}

\maketitle 

\begin{abstract}
    We show a priori bounds for the generalized Parabolic Anderson Model $(\partial_t - \Delta)u = \sigma(u) \diamond \xi$ in finite volume in the full subcritical regime. The approach is based on the observation that, for constant initial data, the time interval over which the solution remains close to its initial condition can be chosen uniformly in the value of the initial condition. A comparison principle is then used to extend this estimate to general bounded initial data. 
\end{abstract}

\tableofcontents

\section{Introduction}
We aim to derive a priori bounds for the solution $u$ to the scalar singular SPDE on the torus $\T^d$ 
\begin{equs}\label{eq:gPAM}
(\partial_t - \Delta + m^2)u &= \sum_{i=1}^\ell \sigma_i (u)  \xi_i \,  \\ 
  u(0, \cdot) &= u_0 \in L^\infty(\T^d)\, , 
\end{equs}
with the noises $\xi_i$ being random variables on some probability space $(\Omega, \CF, \P)$ taking values almost surely in $\sC_\fs^{-\bar{\beta}}(\R \times \T^d)$ \footnote{$\sC_\fs^{-\bar{\beta}}$ denotes the usual Hölder-Besov space with parabolic scaling $\fs = (2,1, \dots, 1)$. },  $\bar{\beta} \in (0,2)$, $\ell \in \N$,  mass $m^2 \in \R_{\geq 0}$ and smooth nonlinearities $\sigma_i$ with sufficiently many bounded derivatives. Due to the low regularity of the driving noises $\xi_i $ when $\bar{\beta}>1$, the products  $\sigma_i(u)  \xi_i$ are not classically well-defined and equation \eref{eq:gPAM} needs to be renormalized to give a meaningful notion of solution. This can be achieved, for example, using the theory of Regularity Structures \cite{hairer2014theory}, paracontrolled calculus \cite{gubinelli2015paracontrolled}, or the flow equation approach \cite{duch2025flow}. Once the local well-posedness of \eref{eq:gPAM} is established in a suitable sense global well-posedness can then be shown by providing a priori bounds on the norm of the solution $u(t)$ preventing blow up in finite time. This problem has garnered a lot of attention in recent years, most notably
\begin{enumerate}
    \item \cite{chandra2026priori} in the framework of regularity structures in the regime $\bar{\beta} \in (0,\tilde{\beta})$ with $\tilde{\beta}\approx 1.13$ on finite volume,  
    \item \cite{shen2026global,shen2026global_onplane} in the framework of paracontrolled calculus in the regime $\bar{\beta} \in (0, \sqrt{5} - 1)$ on the plane $\R^2$,
    \item \cite{bringmann2026global} in the regime $\bar{\beta} \in (0, \frac{3}{2})$ for the dynamical massive sine-Gordon model in finite volume.
\end{enumerate}
In this work, we will employ the theory of regularity structures \cite{hairer2014theory} as the solution theory to give meaning to \eref{eq:gPAM}, whose setup we will now describe in order to state the main result. The strategy, however, does not fundamentally rely on regularity structures and can be adapted to other solution theories. Let the regularity structure $\sT = (A,G, T)$ for \eref{eq:gPAM} be constructed as in \cite[Section 8]{hairer2014theory}, with abstract noise symbols $\{ \Xi_i \}_{i=1, \dots, \ell}$ having homogeneity $|\Xi_i| = -\bar{\beta}$. Let $G_m = (\partial_t - \Delta + m^2)^{-1}$ be the (massive) heat kernel on the torus and for every $\eps>0$  we let $Z^\eps = (\Pi^\eps, \Gamma^\eps)$ be an admissible model (corresponding to a lift of smooth noise $\xi_i^\eps$), renormalized via a strong preparation map in the sense of \cite{bailleul2026locality}. For $\gamma \in (\bar{\beta}, 2)$ we denote $\CU^\eps \in \CD^{\gamma,0}$ as the modelled distribution solving the abstract fixed point equation  
\begin{equs}
\CU^\eps = (\CK^{m, \eps}_{\bar{\gamma}} + R^{m}_\gamma \CR^\eps)(1_{t>0}\sum_{i=1}^\ell \sigma_i(\CU^\eps)\Xi_i ) + G_m u_0 \, , 
\end{equs}
where $\CK^{m, \eps}_{\bar{\gamma}}, R^m_\gamma$ are the decomposed and lifted version of $G_m$ as in \cite[Section 5,6]{hairer2014theory}, with $\bar{\gamma}= \gamma - \bar{\beta} >0 $. 
The regularized and renormalized solution to \eref{eq:gPAM} is then defined by 
$u^\eps = \CR^{Z^\eps}(\CU^\eps)$. By \cite[Theorem 1.1]{bailleul2026locality}, $u^\eps$ solves at every $\eps > 0$ a renormalized equation of the form 
\begin{equ}\label{eq:renormalized_regularized_spde}
(\partial_t - \Delta + m^2)u^\eps(z) = F^\eps(z, u^\eps(z)) =: \sum_{i=1}^\ell \sigma_i(u^\eps) \diamond \xi_i^\eps  \, . 
\end{equ}
The exact form of the nonlinearity $F^\eps$ is not relevant to our approach, its only important feature is that it is a local function of the solution $u^\eps$. Within this context we now state the main result: 
\begin{theorem}\label{thm:apriori_bound}
    Let $u_0 \in L^\infty(\T^d)$ and $u^\eps = \CR^{Z^\eps}(\CU^\eps)$, $\eps > 0$ be a family of regularized and renormalized solutions to the SPDE \eref{eq:gPAM}, then it holds that for any $T \in (0,\infty)$: 
    \begin{enumerate}
        \item If $m^2=0$, there exists a constant $\mathfrak{c} = \mathfrak{c}_{\gamma, \bar{\beta}, \sigma} > 0 $ such that 
        \begin{equ}\label{eq:apriori_gPAM}
        \norm{u^\eps (T)}_\infty  \leq \norm{u_0}_\infty + \mathfrak{c}_{\gamma, \bar{\beta}, \sigma} T   \left(1\vee \$ Z^\eps  \$_{\gamma; [-1, T+2] \times \T^d }^{\left(r + \frac{\gamma}{2-\bar{\beta}} \right) \frac{2}{2-\bar{\beta}} } \right) \, . 
        \end{equ}
        \item If $m^2 > 0$, for any $ p \in \N$ there exist constants $\mathfrak{c} = \mathfrak{c}_{m^2,p}$ and $ \mathfrak{C} = \mathfrak{C}_{\gamma, \bar{\beta}, \sigma, m^2,p}$ such that: 
        \begin{equs}\label{eq:apriori_mSG}
         \E[ \norm{u^\eps(T)}_\infty^p ] \leq e^{-\mathfrak{c}p T} \norm{u_0}^p_\infty + \mathfrak{C} \sup_{n \in \N_0 } \E  \left(1\vee \$ Z^\eps \$^{\left( r + \frac{\gamma}{2-\bar{\beta}} \right) \frac{2p}{2-\bar{\beta}}}_{\gamma;  [n -1, n +3] \times \T^d  }\right) \, . 
    \end{equs}
    \end{enumerate}
\end{theorem}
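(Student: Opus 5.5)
The plan follows the strategy outlined in the abstract: a uniform short-time estimate for constant initial data, then a comparison argument, then iteration in time.

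\textbf{Step 1: uniform short-time bound for constant data.} For a constant $c\in\R$, let $\CU^{\eps,c}$ solve the fixed point equation with $u_0\equiv c$. We write $\CU^{\eps,c}=c+\CV$. The key observation is that the nonlinearities $\sigma_i(c+\cdot)$ have derivatives bounded uniformly in $c$, since the $\sigma_i$ have bounded derivatives. Hence the fixed point map for $\CV$ in $\CD^{\gamma,0}$ on $[0,\tau]$ has Lipschitz constants depending only on $\$Z^\eps\$_\gamma$ and $\sigma$, and not on $c$. The standard contraction estimate \cite[Sections 7,8]{hairer2014theory} uses the short-time gain $\tau^{\bar\gamma/2}$ with $\bar\gamma$ tied to $2-\bar\beta$. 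It should give a time
\[
\tau \simeq \bigl(1\vee \$Z^\eps\$_{\gamma}\bigr)^{-\left(r+\frac{\gamma}{2-\bar\beta}\right)\frac{2}{2-\bar\beta}},
\]
independent of $c$, such that $\sup_{t\le\tau}\|u^{\eps,c}(t)-c\|_\infty\le 1$. Here $r$ is the polynomial degree with which the $\Pi,\Gamma$ norms enter the reconstruction and multiplication bounds. For $m^2>0$ the constant is replaced by $e^{-m^2 t}c$, which is a harmless modification.

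\textbf{Step 2: comparison.} At fixed $\eps>0$, the function $u^\eps$ solves the classical parabolic equation \eqref{eq:renormalized_regularized_spde} with a nonlinearity $F^\eps(z,u)$ that is local and smooth in $u$. The classical comparison principle therefore applies. With $c_\pm=\pm\|u_0\|_\infty$ we have $c_-\le u_0\le c_+$, so $u^{\eps,c_-}\le u^\eps\le u^{\eps,c_+}$ on $[0,\tau]$. We must check that the reconstructed constant-data solutions solve the same renormalized equation; this holds because the strong preparation map gives \cite[Theorem 1.1]{bailleul2026locality} for any initial datum. Combining with Step 1 yields $\|u^\eps(\tau)\|_\infty\le\|u_0\|_\infty+1$.

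\textbf{Step 3: iteration.} We restart at times $k\tau$, using that $\tau$ depends only on the model norm on a unit-size window and not on the data. After $\lceil T/\tau\rceil$ steps we obtain $\|u^\eps(T)\|_\infty\le\|u_0\|_\infty+T/\tau+1$, which is \eqref{eq:apriori_gPAM}. The enlarged domain $[-1,T+2]$ comes from kernel supports near the endpoints.

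\textbf{The massive case.} For $m^2>0$, the comparison solutions satisfy $|u^{\eps,c}(t)|\le e^{-m^2 t}|c|+\delta$ on the window. We work on unit time windows $[n,n+1]$, subdivided into steps of length $\tau_n$ determined by $\$Z^\eps\$$ on $[n-1,n+3]$. This gives a recursion
\[
a_{n+1}\le e^{-m^2}a_n+C\bigl(1\vee\$Z^\eps\$_{[n-1,n+3]}\bigr)^{\kappa},\qquad \kappa=\Bigl(r+\tfrac{\gamma}{2-\bar\beta}\Bigr)\tfrac{2}{2-\bar\beta}.
\]
Taking $p$-th moments with the elementary inequality $(a+b)^p\le(1+\eta)^{p}a^p+C_\eta b^p$ and summing the geometric series gives \eqref{eq:apriori_mSG}.

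\textbf{Main obstacle.} The delicate point is Step 1. One must track the exact power of $\$Z^\eps\$$ in the fixed point time and confirm that no hidden $c$-dependence enters through $\Gamma$-actions on $\sigma_i(c+\CV)$. One must also handle the initial-time singularity with exponent $\eta=0$ in $\CD^{\gamma,0}$ for constant data. I would first try to verify directly that $\CD^{\gamma,0}$ bounds of $\sigma(c+\CV)$ depend only on $\|\sigma^{(k)}\|_\infty$ for $k\ge1$, together with $|\sigma(c)|$ through the lowest-order term. The term $\sigma(c)$ is a concern because it grows with $c$ unless $\sigma$ is bounded. If $\sigma$ is not bounded, the bound must instead be placed on the increment $\CV$, with $\sigma_i(c)\Xi_i$ treated as an inhomogeneous forcing. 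Its contribution then must be absorbed, presumably by requiring $\sigma_i$ bounded, as "sufficiently many bounded derivatives" including order zero suggests.
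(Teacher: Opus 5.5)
Your massless argument (constant data, translated nonlinearities $\sigma_i(c+\cdot)$ with $c$-independent $C^k$ norms, comparison with the constant-data solutions started at $\pm\norm{u_0}_\infty$, restarting) is essentially the paper's. Your moment recursion is a fine substitute for the paper's use of Minkowski's inequality.

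\textbf{The gap is the one-step estimate in the massive case.} You replace the centre $c$ by $e^{-m^2t}c$ and call this harmless; it is not. The fluctuation $v=u^{\eps,c}-e^{-m^2t}c$ solves $(\partial_t-\Delta+m^2)v=\sum_i\sigma_i(e^{-m^2t}c+v)\diamond\xi_i$, and $\sigma_i(e^{-m^2t}c+\cdot)$ is no longer a constant translate of $\sigma_i$. In the fixed point you must compose with $e^{-m^2t}c\,\1+\CV$. Its $\CD^\gamma$ seminorm on a window of length $\tau$ contains the time increments of $e^{-m^2t}c$, of size $|c|m^2\tau^{1-\gamma/2}$. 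Through the composition bound this enters the contraction constant, so the existence time shrinks as $|c|\to\infty$. This is precisely the chain-rule obstruction the paper's remark describes for absorbing $G_tu_0$.

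It is also not just a weakness of the method. The first-order term $G_m\bigl(1_{s>0}\,\sigma_i(e^{-m^2s}c)\,\xi_i\bigr)$ multiplies a distribution of regularity $-\bar\beta<-1$ by a coefficient oscillating in time at frequency of order $|c|m^2$ (take $\sigma_i=\sin$). So you should not expect $|u^{\eps,c}(t)|\le e^{-m^2t}|c|+\delta$ on a $c$-independent window with constants depending only on $\$Z^\eps\$$.

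\textbf{The missing idea is to keep the shift constant and let the barrier move.} The paper defines $\Phi(\cdot,a)$ by $(\partial_t-\Delta+m^2)\Phi=m^2a+\sum_i\sigma_i(\Phi)\diamond\xi_i$, $\Phi(0,\cdot)=a$. The extra source $m^2a$ makes the constant $a$ stationary for the massive linear part. Hence $w_a=\Phi-a$ has zero data and nonlinearity $\sigma_i(\cdot+a)$, and the $a$-independent time $T_\delta$ follows exactly as in your Step 1.

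The paper then proves in addition the Lipschitz bound $\sup_{t\le T_\delta}\norm{w_a(t)-w_b(t)}_\infty\le\delta|a-b|$, i.e.\ $1-\delta\le\partial_a\Phi\le1+\delta$. Your plan has no analogue of this. With $a_\pm(t)=\pm\norm{u_0}_\infty e^{-m^2t/(1+\delta)}$, the barriers $B_\pm(t,x)=\Phi((t,x),a_\pm(t))$ are not solutions but satisfy
\[
(\partial_t-\Delta+m^2)B_\pm-F(t,x,B_\pm)=m^2a_\pm(t)\Bigl(1-\frac{\partial_a\Phi((t,x),a_\pm(t))}{1+\delta}\Bigr).
\]
The right-hand side has the sign of $a_\pm$ exactly because of the derivative control. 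So $B_+$ is a supersolution and $B_-$ a subsolution, and comparison yields $\norm{u(t)}_\infty\le e^{-m^2t/(1+\delta)}\norm{u_0}_\infty+\delta$ on $[0,T_\delta]$.

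The rate drops from $m^2$ to $m^2/(1+\delta)$. Heuristically, the extra room $\approx|c|m^2t\,\delta/(1+\delta)$ is what absorbs the $c$-dependent fluctuations above. This costs nothing in the theorem, since $\mathfrak c$ need not equal $m^2$. With this one-step estimate replacing yours, your recursion over unit windows and the moment bound go through unchanged.
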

\begin{remark}
    The dynamical sine-Gordon model is covered by this framework after a Da Prato–Debussche decomposition, upon identifying the appropriate driving noises and nonlinearities. The local well-posedness results of \cite{hairer2016dynamical, chandra2018dynamical} in combination with the uniform in time bound on $p$-th moments of \eref{eq:apriori_mSG} can be used to construct an invariant measure via the Krylov-Bogliubov existence theorem as in \cite[Section 4]{tsatsoulis2018}.  
\end{remark}
The startegy of the proof is the following. For any $a \in \R$ we define $\Phi^\eps(\cdot,  a) : \R_+ \times \T^d \to \R $ as the solution to
    \begin{equs}\label{eq:equation_for_Phi_massive}
    (\partial_t - \Delta +m^2 )\Phi^\eps(z, a) &= m^2 a +  \sum_{i=1}^\ell \sigma_i(\Phi^\eps(z,a))\diamond \xi^\eps_i \\ 
    \Phi^\eps(0, \cdot , a) &= a \, . 
    \end{equs}
 The first step is to show that the time interval for which $\Phi$ stays close to its constant initial condition does not depend on the value of the initial condition:
 \begin{lemma}\label{lem:flow_stability}
    Let $\delta \in (0,1) $, then there exists a time $T_\delta > 0 $, independent of $a \in \R$, such that  the following control on the flow $\Phi$ holds
    \begin{equs}\label{eq:flow_stability}
    \sup_{a \in \R} \sup_{z \in [0,T_\delta] \times \T^d} |\Phi^\eps(z, a) - a| &\leq \delta  \\ \label{eq:flow_stability_derivative}
    1- \delta \leq \sup_{a \in \R} \sup_{z \in [0,T_\delta] \times \T^d} \partial_a \Phi^\eps(z,a) &\leq 1 + \delta  \, .  
    \end{equs}
\end{lemma}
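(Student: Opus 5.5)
Substitute $\Phi^\eps(z,a)=a+v^a(z)$, so that $v^a$ solves
\begin{equ}
(\partial_t-\Delta+m^2)v^a=\sum_{i=1}^\ell \sigma_i(a+v^a)\diamond\xi_i^\eps\,,\qquad v^a(0)=0\,;
\end{equ}
the terms $m^2a$ cancel, which is the reason for the $m^2a$ forcing in \eref{eq:equation_for_Phi_massive}. At the abstract level $v^a$ is the reconstruction of the fixed point
\begin{equ}
\CV^a=(\CK^{m,\eps}_{\bar\gamma}+R^m_\gamma\CR^\eps)\Bigl(1_{t>0}\sum_i\sigma_i^a(\CV^a)\Xi_i\Bigr)\,,\qquad \sigma_i^a:=\sigma_i(a+\cdot)\,,
\end{equ}
with zero initial datum. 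The point is that every constant in the local solution theory of \cite[Section 8]{hairer2014theory} depends on the nonlinearity only through $\sup_{k\le r}\|D^k\sigma_i\|_\infty$. These norms are invariant under translation, so $\|D^k\sigma_i^a\|_\infty=\|D^k\sigma_i\|_\infty$ for every $a\in\R$, and the fixed-point argument is uniform in $a$. I would also check that renormalization commutes with translation. Since the renormalized nonlinearity is built from $\sigma_i$ and its derivatives evaluated at $u$, the renormalized equation for $\sigma$ at $a+v$ equals the renormalized equation for $\sigma^a$ at $v$. So $v^a$ is the renormalized solution with nonlinearity $\sigma^a$, and the bounds apply to it.

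The key steps are as follows.

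First, run the contraction on a ball of radius $1$ in $\CD^{\gamma,0}$ over $[0,T]\times\T^d$. Use the short-time gain $T^{\theta}$ from \cite[Theorem 7.1]{hairer2014theory}, with $\theta>0$ of order $\bar\gamma/2$. Composition by $\sigma_i^a$ is locally Lipschitz on $\CD^{\gamma}$ with constants $C(\|\sigma\|_{C^r})\,(1+\$Z^\eps\$)^{r}$ that do not depend on $a$. This gives existence on $[0,T_\star]$, where $T_\star^\theta\,C\,(1\vee\$Z^\eps\$)^{K}\le c$.

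Second, bound the output. Since $\CV^a$ vanishes at $t=0$ and has positive regularity, its component in the unit direction satisfies
\begin{equ}
\sup_{z\in[0,T]\times\T^d}|v^a(z)|\le C\,T^{\theta}\,(1\vee\$Z^\eps\$)^{K}\,.
\end{equ}
Choosing $T_\delta$ so that the right-hand side is at most $\delta$ gives \eref{eq:flow_stability} uniformly in $a$. This choice is what produces powers of $\$Z^\eps\$$ like those in Theorem~\ref{thm:apriori_bound}.

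Third, handle the derivative. Differentiating in $a$, $w^a=\partial_a\Phi^\eps=1+\partial_a v^a$ solves the linearized equation
\begin{equ}
(\partial_t-\Delta+m^2)\partial_av^a=\sum_i\bigl(\sigma_i'(a+v^a)(1+\partial_av^a)\bigr)\diamond\xi_i^\eps\,,\qquad \partial_av^a(0)=0\,.
\end{equ}
At the abstract level this is the affine fixed point
\begin{equ}
\CW=(\CK+R\CR)\Bigl(1_{t>0}\sum_i D\sigma_i^a(\CV^a)(\mathbf 1+\CW)\Xi_i\Bigr)\,.
\end{equ}
It is a contraction on the same time interval, with constants depending only on $\|\sigma\|_{C^{r+1}}$ and on the bound for $\CV^a$ from the second step, and again uniform in $a$. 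The same small-time estimate then gives $|\partial_av^a|\le\delta$, which yields \eref{eq:flow_stability_derivative}. Differentiability of $\Phi^\eps$ in $a$ at fixed $\eps>0$ is classical, because the equation is smooth; one only needs to identify $\partial_av^a$ with the reconstruction of $\CW$. This follows by differentiating the abstract fixed point, or from uniqueness of the linearized smooth equation.

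The main obstacle is making the $a$-independence rigorous at the level of renormalization. The renormalized nonlinearity $F^\eps$ from \cite{bailleul2026locality} has to transform correctly under $\sigma\mapsto\sigma(a+\cdot)$. I would argue this from locality: the counterterms are of the form $\sum_\tau c^\eps_\tau\,\Upsilon_\tau[\sigma](u)$, where each $\Upsilon_\tau[\sigma]$ is a polynomial in derivatives of $\sigma$ evaluated at $u$, and such expressions are translation-covariant. A secondary point is bookkeeping: the constants should depend on $\sigma$ only through sup-norms of its derivatives, never through $|\sigma(a)|$ in a way that grows with $a$. This holds because the $\sigma_i$ are assumed to have bounded derivatives, and $\sigma_i$ itself enters only through translation-invariant sup-norms.
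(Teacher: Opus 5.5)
Your argument is correct, and for \eref{eq:flow_stability} it is the paper's argument. There too one writes $\Phi^\eps(\cdot,a)=a+w_a$ and runs the contraction of \cite[Thm.~7.8]{hairer2014theory} for the fixed point with $\sigma_{i,a}(\CW_a)=\sigma_i(a\1+\CW_a)$. The time $T_\delta$ is independent of $a$ because $\norm{D^k\sigma_a}_\infty=\norm{D^k\sigma}_\infty$, and the sup bound on $w_a$ is read off from the $\1$-component.

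The real difference is in \eref{eq:flow_stability_derivative}. The paper never writes down the tangent equation. Instead it proves a Lipschitz estimate in the parameter, $\$\CW_a-\CW_b\$_{\gamma,0;T_\delta}\le\delta|a-b|$. It does this by applying \eref{eq:bound_difference_of_difference} with $F=\sigma_a$, $G=\sigma_b$, using $\sup_k\norm{D^k(\sigma_a-\sigma_b)}_\infty\le\sup_k\norm{D^{k+1}\sigma}_\infty|a-b|$, and absorbing the $\$\CW_a-\CW_b\$$ term for small $T_\delta$. Since $\Phi^\eps(z,a)-\Phi^\eps(z,b)=(a-b)+w_a(z)-w_b(z)$, the bound $|\partial_a\Phi^\eps-1|\le\delta$ then follows from difference quotients. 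Differentiability in $a$ is taken qualitatively at fixed $\eps$ from classical semilinear theory \cite{henry2006geometric}.

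That route needs only a stability estimate, and never identifies $\partial_a\Phi^\eps$ with a reconstructed modelled distribution. Your tangent fixed point gives more: an $\eps$-uniform abstract description of $\partial_a\Phi^\eps$, which would also handle higher derivatives. The cost is showing that $a\mapsto\CV^a$ is $C^1$ into $\CD^{\gamma,0}$, via the implicit function theorem and hence Fr\'echet differentiability of $\CV\mapsto\hat\sigma_\gamma(\CV)$. Of your two identifications, this abstract one is the safe choice. The alternative via uniqueness of the smooth linearized equation would first require checking that the reconstruction of your $\CW$ solves $(\partial_t-\Delta+m^2)h=\partial_uF^\eps(z,a+v^a)(1+h)$. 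That amounts to showing renormalization commutes with linearization, which is a counterterm computation of the kind you were trying to avoid.

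Finally, the ``main obstacle'' you identify does not arise in the paper's setup. The identity $\hat\sigma_\gamma(a\1+\CV)=\widehat{(\sigma_a)}_\gamma(\CV)$ holds directly from the definition of lifted composition, since only the $\1$-component is shifted. Moreover, by uniqueness at fixed $\eps$, $\Phi^\eps(\cdot,a)$ is the reconstruction of the abstract solution $a\1+\CV^a$. So the translation acts on the abstract fixed point itself, and nothing about the structure of the counterterms in $F^\eps$ is needed. Your locality argument is fine, but superfluous.
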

 This is proved by considering the `fluctuating' part $w^\eps_a :=\Phi^\eps(\cdot, a)-a$ of the solution $\Phi^\eps(\cdot, a)$ around $a$ which solves
   \begin{equs}
    (\partial_t - \Delta+m^2) w^\eps_a &= \sum_{i=1}^\ell  \sigma_i(w^\eps_a + a) \diamond \xi =: \sum_{i=1}^\ell \sigma_{i,a}(w^\eps_a) \diamond \xi^\eps  \\ 
    w^\eps_a(0, \cdot) &= 0 \, . 
    \end{equs}
The key observation is that the dependence on the constant initial data $a \in \R$ is absorbed into a shift of the nonlinearity $\sigma_a = \sigma(\cdot + a)$. Since $\norm{\sigma}_{C^k} = \norm{\sigma_a}_{C^k}$ the local existence time for $w^\eps_a$ can be chosen independent of $a \in \R$. 

The second step is to use a comparison principle to transfer this properties of $\Phi^\eps$ to the solution of \eqref{eq:apriori_gPAM} with generic initial condition in $L^\infty(\T^d)$. In the massless case $m^2 = 0$ this reduces to the observation that  \[\Phi^\eps(z, -\norm{u_0}_\infty) \leqslant u^\eps(t) \leqslant \Phi^\eps(z,\norm{u_0}_\infty) \qquad \forall z\in [0,T_\delta]\times \T^d.\]

\begin{proposition}\label{prop:one_time_estimate}
    Fix $\eps>0$ and let $u^\eps$ be a smooth solution to the renormalized SPDE \eref{eq:gPAM} with initial condition $u_0\in L^\infty(\mathbb{T}^d)$. For some $\delta \in (0,1)$, let $T_\delta>0$ be such that \eref{eq:flow_stability}, \eref{eq:flow_stability_derivative} hold. Then:  
        \begin{equ}\label{eq:massive_one_time_estimate}
        \forall t \in [0,T_\delta]: \, \norm{u^\eps(t)}_\infty \leqslant e^{- \frac{m^2}{1+\delta}  t } \norm{u_0}_\infty + \delta  \, . 
        \end{equ}
\end{proposition}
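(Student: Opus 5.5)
We construct a supersolution of the form $\bar u(t,x) := \Phi^\eps(t,x,a(t))$, with a time-dependent level $a(t)$ to be chosen, and compare it with $u^\eps$ via the parabolic maximum principle. The lower bound follows symmetrically from the subsolution $\Phi^\eps(t,x,-a(t))$. At fixed $\eps>0$ everything is smooth, and the renormalized nonlinearity $F^\eps(z,u)$ is a local, smooth function of $u$ that is Lipschitz in $u$ on compacts. Hence the classical comparison principle for $(\partial_t-\Delta+m^2)v = F^\eps(z,v)$ is available: if $\bar u$ is a supersolution with $\bar u(0)\ge u_0$, then $\bar u\ge u^\eps$. To see this, apply the maximum principle to $e^{-Lt}(u^\eps-\bar u)_+$.

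\textbf{Computing the equation for $\bar u$.} By the chain rule and \eref{eq:equation_for_Phi_massive},
\begin{equs}
(\partial_t-\Delta+m^2)\bar u = m^2 a(t) + F^\eps(z,\bar u) + \partial_a\Phi^\eps(z,a(t))\,\dot a(t).
\end{equs}
So $\bar u$ is a supersolution as soon as $m^2 a(t) + \partial_a\Phi^\eps\,\dot a(t)\ge 0$. We choose $a$ nonnegative and decreasing. By \eref{eq:flow_stability_derivative}, $\partial_a\Phi^\eps\le 1+\delta$, so it suffices that $\dot a \ge -\frac{m^2}{1+\delta}a$. We therefore take $a(t)=e^{-\frac{m^2}{1+\delta}t}\norm{u_0}_\infty$, which gives $\dot a(t)=-\frac{m^2}{1+\delta}a(t)$. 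Then
\begin{equs}
m^2a+\partial_a\Phi^\eps\,\dot a = m^2 a\Big(1-\frac{\partial_a\Phi^\eps}{1+\delta}\Big)\ge 0 .
\end{equs}
For this we need $\partial_a\Phi^\eps\le 1+\delta$ pointwise for all $a$ and $z\in[0,T_\delta]\times\T^d$, which is how \eref{eq:flow_stability_derivative} is read. The lower bound $1-\delta$ guarantees positivity of $\partial_a\Phi^\eps$, so the direction of the inequality is not flipped. At $t=0$ we have $\bar u(0,x)=a(0)=\norm{u_0}_\infty\ge u_0(x)$.

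\textbf{Conclusion.} By comparison, $u^\eps(t,x)\le \Phi^\eps(t,x,a(t))\le a(t)+\delta$ for $t\le T_\delta$, using \eref{eq:flow_stability}. Symmetrically, $\underline u(t,x)=\Phi^\eps(t,x,-a(t))$ is a subsolution: now $\dot{(-a)}>0$, and $-m^2a+\partial_a\Phi^\eps\,\frac{m^2}{1+\delta}a\le 0$, again by $\partial_a\Phi^\eps\le1+\delta$. This gives $u^\eps\ge -a(t)-\delta$, and \eref{eq:massive_one_time_estimate} follows.

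\textbf{Main obstacle.} The delicate point is the rigorous justification of the comparison principle for the renormalized equation at fixed $\eps$ with merely $L^\infty$ initial data. This requires local Lipschitz continuity of $F^\eps$ in $u$, which holds since $\sigma_i$ are smooth and the renormalization is local. It also requires enough regularity of $\Phi^\eps$ in $a$ for the chain rule, which follows from smooth dependence on parameters. The initial layer is handled by comparing on $[s,T_\delta]$ and letting $s\downarrow 0$, using $u^\eps(s)\to u_0$ in the appropriate weak sense together with the uniform boundedness provided by the supersolution.
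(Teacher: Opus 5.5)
Your proposal is correct and follows the paper's proof essentially verbatim. Both arguments take the moving barriers $\Phi^\eps(t,x,\pm a(t))$ with $a(t)=e^{-\frac{m^2}{1+\delta}t}\norm{u_0}_\infty$ and use the chain rule to get the defect $\pm m^2 a(t)\bigl(1-\partial_a\Phi^\eps/(1+\delta)\bigr)$, whose sign is fixed by $\partial_a\Phi^\eps\le 1+\delta$. They then compare with $u^\eps$ and invoke \eqref{eq:flow_stability} to get $|u^\eps|\le a(t)+\delta$. Your extra remarks on justifying comparison with $L^\infty$ data flesh out what the paper simply cites as the classical comparison principle. Note also that the lower bound $1-\delta$ on $\partial_a\Phi^\eps$ is not actually needed for the sign argument.
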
 
This comparison argument is available only at the regularized level $\eps>0$, where the renormalised equation \eqref{eq:apriori_mSG} is a local function of the solution. Crucially, however, the resulting estimate is uniform in $\eps$ and therefore survives the limit $\eps\to0$.
The proof of Theorem \ref{thm:apriori_bound} follows then by standard arguments which we include for completness.
\begin{remark}
         Step one considers only constant initial data and hence sidesteps the observation made in \cite[Remark 2.44]{chandra2025rough}, insofar as attempting to absorb the non-constant harmonic extension $(G_tu_0)(x)$ for non-constant $u_0$ runs into the trouble of introducing by the chain rule existence times dependent on $\norm{u_0}_\infty$. 
\end{remark}

\begin{comment}
    The step with the transport term can also be argued with: Instead of directly analyzing the size change of $u$, we consider the size change of $v$, which, conveniently tells us that as $\partial_a \Phi \neq 0$, that at extrema of $v$, we have $(\partial_t - \Delta)v = 0 $. So if we are at a maximum of $v$, $\partial_t v = \Delta v \leq 0 $, and vice versa. Therefore $\norm{v(t, \cdot)}_\infty$ is non-increasing! This size control on $v$ transfers directly to size control of $u$, as the flow $\Phi(t,x, a)$ stays uniformly close to $a$ on the considered time interval, that is $|\Phi(z, a) - a|\leq \delta $ uniformly in $a \in \R$,$z \in [0,h] \times \T^2$.
\end{comment}

\section{Proof of the main theorem}
We will drop the letter $\eps$ below to avoid clutter in the notation, keeping in mind that we always consider the qualitatively smooth pathwise PDE at some regularization $ \eps > 0 $. 

\begin{lemma}\label{lem:control_on_w}
    Let $\delta \in (0,1)$. Then there exists $T_\delta \in (0,1] $ such that for any $a \in \R$, $\CW_a$ is a fixed point of  
    \begin{equ}
    \CW_a = (\CK^{m}_{\bar{\gamma}} + R^m_\gamma \CR)(1_{t> 0} \sum_{i=1}^\ell \sigma_{i,a} (\CW_a) \Xi  ) \, , 
    \end{equ}
    for $\CW_a \in \CD^{\gamma,0}$ on $(0,T_\delta)$, where $\sigma_{i,a}(\CW_a) = \sigma_i(a \1 + \CW_a)$, and such that we have following control on $w_a =\CR \CW_a$: 
    \begin{equs}\label{eq:control_for_w1}
    \sup_{t \in [0,T_\delta ]}\norm{w_a(t,\cdot)}_\infty &\leq \delta \\ \label{eq:control_for_w2}
    \sup_{t \in [0,T_\delta ]} \norm{w_a(t, \cdot) - w_b(t, \cdot)}_\infty & \leq \delta |a-b| \, . 
    \end{equs}
\end{lemma}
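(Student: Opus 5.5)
The plan is to run the standard short-time fixed point argument of \cite[Section 7]{hairer2014theory} for the map
\[
\mathcal{M}_a(\CW) := (\CK^{m}_{\bar{\gamma}} + R^m_\gamma \CR)\Bigl(1_{t>0}\sum_{i=1}^\ell \sigma_{i,a}(\CW)\Xi_i\Bigr)
\]
on a ball $B_T = \{\CW \in \CD^{\gamma,0}_T : \$\CW\$_{\gamma;T} \le 1\}$. The point to keep track of is that every constant depends on $a$ only through the norms $\norm{\sigma_{i,a}}_{C^k}$, and these equal $\norm{\sigma_i}_{C^k}$.

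\textbf{Step 1: uniform fixed point.} The lifted composition $\CW \mapsto \sigma_{i,a}(\CW)$ is locally Lipschitz from $\CD^{\gamma,0}$ to $\CD^{\gamma,0}$, by \cite[Prop.~6.13]{hairer2014theory}. Its constants depend only on $\norm{\sigma_{i,a}}_{C^{k}}$ for $k$ large, on the model norm $\$Z\$$ on $[-1,2]\times\T^d$, and on the radius of the ball. Hence these constants are uniform in $a$.

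Multiplication by $\Xi_i$ maps $\CD^{\gamma,0}$ into $\CD^{\bar\gamma,-\bar\beta}$. The Schauder estimate for $\CK^m_{\bar\gamma}+R^m_\gamma\CR$ on short intervals \cite[Thm.~7.1]{hairer2014theory} then yields a factor $T^{\theta}$ for some $\theta>0$. This is possible because $\bar\gamma>0$ and $-\bar\beta>-2$, and the initial condition is zero.

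Consequently there is $T_0>0$, depending on $\$Z\$$ and $\sigma$ but not on $a$, such that $\mathcal{M}_a$ maps $B_{T_0}$ into itself and is a contraction there. Its unique fixed point is $\CW_a$.

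\textbf{Step 2: smallness in $L^\infty$.} Since $\CW_a(0)=0$ and $\CW_a$ starts at zero, the first component $w_a=\CR\CW_a = \langle \CW_a,\1\rangle$ satisfies $|w_a(t,x)| \lesssim t^{\gamma/2}\$\CW_a\$$ on $[0,T]$. Alternatively, one uses the same $T^\theta$ gain,
\[
\$\CW_a\$_{\gamma;T} = \$\mathcal{M}_a(\CW_a)\$_{\gamma;T} \le C T^{\theta}.
\]
Shrinking $T\le T_0$ until $CT^\theta\le\delta$ gives \eqref{eq:control_for_w1}, with $T$ still uniform in $a$.

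\textbf{Step 3: Lipschitz dependence on $a$.} Write
\[
\CW_a-\CW_b = \mathcal{M}_a(\CW_a)-\mathcal{M}_a(\CW_b) + \bigl(\mathcal{M}_a(\CW_b)-\mathcal{M}_b(\CW_b)\bigr).
\]
The first difference is bounded by $\tfrac12\$\CW_a-\CW_b\$$ by the contraction property. For the second, one has
\[
\sigma_{i,a}(\CW)-\sigma_{i,b}(\CW)=\int_0^1 \sigma_i'\bigl(\CW + (b+s(a-b))\1\bigr)\,ds\;(a-b)
\]
as lifted functions. Its $\CD^{\gamma,0}$ norm is $\lesssim |a-b|$, with constants depending only on $\norm{\sigma_i}_{C^{k+1}}$, which is again uniform. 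Applying Schauder with the $T^\theta$ factor gives
\[
\$\CW_a-\CW_b\$ \le 2C' T^\theta |a-b|.
\]
Reducing $T$ further so that $2C'T^\theta\le\delta$, and passing to $\CR$ in sup norm as in Step 2, gives \eqref{eq:control_for_w2}. Finally set $T_\delta=\min(T,1)$.

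The main obstacle is Step 3: checking carefully that the composition estimate for the difference $\sigma_{i,a}(\CW)-\sigma_{i,b}(\CW)$, taken in the modelled-distribution norm, is genuinely linear in $|a-b|$ with $a$-independent constants. Doing so requires tracking every occurrence of $a$ through the Taylor-expansion definition of $\hat F(\CW)$ and observing that $a$ enters only via derivatives of $\sigma_i$ evaluated at shifted points. Everything else is the standard fixed point argument with constants that are manifestly shift invariant.
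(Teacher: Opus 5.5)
Your proposal is correct and is essentially the paper's proof. It uses the same contraction argument with constants depending on $a$ only through $\norm{D^k\sigma_{i,a}}_\infty=\norm{D^k\sigma_i}_\infty$, and the same smallness of $\$\CW_a\$_{\gamma,0;T}$ (hence of $\sup_t\norm{w_a(t,\cdot)}_\infty$) from the $T^{(2-\bar\beta)/2}$ Schauder gain. It also gets the Lipschitz bound the same way, by absorbing the contraction part and bounding $\hat\sigma_{i,a}-\hat\sigma_{i,b}$ by $\sup_k\norm{D^{k+1}\sigma_i}_\infty\,|a-b|$, which is what your integral representation encodes.

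The only slip is the first option in Step 2. A $\CD^{\gamma,0}$ bound gives no $t^{\gamma/2}$ decay of $\langle \CW_a,\1\rangle$; for $\bar\beta>1$ the natural rate is only $t^{(2-\bar\beta)/2}$. Your alternative $\$\CW_a\$=\$\mathcal{M}_a(\CW_a)\$\le CT^\theta$ is exactly the route the paper takes.
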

\begin{proof}
    We let without loss of generality $\ell = 1$ to keep the notation simple. As $\CW_a$ is a fixed point solution, coming from \cite[Thm. 7.8]{hairer2014theory}, we just need to show that the map 
    \begin{equs}
    \CM_a : \CD^{\gamma,0}_{(0,T)} &\to \CD^{\gamma,0}_{(0,T)} \\ 
    \CW &\mapsto (\CK_{\bar{\gamma}} + R_\gamma \CR)(1_{t> 0} \sigma_a(\CW) \Xi  )
    \end{equs}
    is a well-defined contraction, mapping $\tilde{\delta}$-balls to $\tilde{\delta}$-balls for $T>0$ independent of $a \in \R$, with some $\tilde{\delta} \in (0,\delta \wedge \frac{1}{2})$ specified later. Setting $\zeta = 2-\bar{\beta}$ and recalling $ \$ Z \$_{\gamma; O} := \$ Z \$_{\gamma; [-1, 2] \times \T^d}$ : 
    \begin{enumerate}
        \item To check that it maps balls to balls compute:  
        \begin{equs}
        \$ \CM_a(\CW) \$_{\gamma,0; T} &\leq \$ \CM_a(\CW) \$_{\gamma - \bar{\beta} + 2, -\bar {\beta} + 2  - (2-\bar{\beta}); T } \lesssim T^{\frac{2-\bar{\beta}}{2}} \$ Z \$^r_{\gamma; O} \$ \sigma_a(\CW) \Xi \$_{\gamma - \bar{\beta}, -\bar{\beta}; T }  \\ 
        &\leq T^{\frac{2-\bar{\beta}}{2}}  \$ Z \$^r_{\gamma; O} \left(\sup_{k \leq \frac{\gamma}{\zeta}} \norm{D^k \sigma_a }_\infty  \right) (1 \vee \$Z \$_{\gamma; O})^{\frac{\gamma}{\zeta}}(1 \vee \$ \CW  \$_{\gamma, 0 ; T })^{\frac{\gamma}{\zeta}}\, . 
        \end{equs}
        Here the second inequality is taken from \cite[Theorem 7.1]{hairer2014theory} (with the exponent $r$ of $ \$ Z \$^r$ the one referenced therein), the last inequality from \eref{eq:explicit_bounds_composition}. 
        \item To check that it is a contraction: 
        \begin{equs}
        &\$ \CM_a(\CW) - \CM_a(\CV) \$_{\gamma,0 ; T} \\ \lesssim & T^{\frac{2-\bar{\beta}}{2}} \$ Z \$^r_{\gamma; O} \$ \sigma_a(\CW) - \sigma_a(\CV) \$_{\gamma, 0; T}  \\ 
        \lesssim & T^{\frac{2-\bar{\beta}}{2}} \$ Z \$^r_{\gamma; O} \left(\sup_{k \leq \frac{\gamma}{\zeta} + 1 } \norm{D^k \sigma_a}_\infty  \right)  (1 \vee \$Z \$_{\gamma; \fK})^{\frac{\gamma}{\zeta}}(1 \vee ( \$ \CW \$_{\gamma; \fK} + \$ \CV \$_{\gamma;\fK}) )^{\frac{\gamma}{\zeta}} \$ \CW - \CV  \$_{\gamma; \fK} \, , 
        \end{equs}
        where we again used \cite[Theorem 7.1]{hairer2014theory} and \eref{eq:explicit_bounds_composition_difference}. 
    \end{enumerate} 
    Hence choosing $T_\delta \in (0,1]$ as, say, 
    \begin{equ}\label{eq:local_time_T}
    T_\delta = \epsilon \left( (1\vee \$ Z \$_{\gamma;  O})^{r + \frac{\gamma}{2-\bar{\beta}}} \sup_{k \leq \frac{\gamma}{2-\bar{\beta}} + 1 } \norm{D^k \sigma}_\infty \right)^{ - \frac{2}{2-\bar{\beta}}}
    \end{equ}
    for some $\epsilon = \epsilon(\delta) > 0 $ small enough to absorb all the implicit constants, we get that $\CM_a$ is a strict contraction and maps $\tilde{\delta}$ balls to $\tilde{\delta}$ balls, so $\CW_a$ exists as a solution in $\CD^{\gamma,0}_{(0,T_\delta)}$. 
    This duplication of the arguments of \cite[Thm. 7.8]{hairer2014theory} is to emphasize that the existence time $T_a$ is independent of $a \in \R$ because of 
    \begin{equ}
    \norm{D^k \sigma_a}_\infty  = \norm{D^k \sigma}_\infty \,,  \forall k \in \N_0 \, . 
    \end{equ}
     Coming now to the proofs of \eref{eq:control_for_w1}, \eref{eq:control_for_w2}, one just notes that by definition of the $\$ \cdot \$_{\gamma,0; T}$ norm and the equality $w_a = \langle \1, \CW_a \rangle $ that 
    \begin{equ}
    \sup_{t \in [0,T_\delta]} \norm{w_a(t, \cdot)}_\infty \leq \$ \CW_a \$_{\gamma,0; T_\delta} \leq \tilde{\delta} < \delta  \, . 
    \end{equ}
    Similarly we just employ \eref{eq:bound_difference_of_difference} to arrive at 
    \begin{equs}
    & \$ \CW_a - \CW_b \$_{\gamma, 0; T_\delta} \\ \lesssim & T_\delta^{\frac{2-\bar{\beta}}{2}} \$ Z \$_{\gamma; O}^r \$ \sigma_a(\CW_a) - \sigma_b(\CW_b) \$_{\gamma, 0; T_\delta} \\
    \lesssim & T_\delta^{\frac{2-\bar{\beta}}{2}}  (1 \vee \$ Z \$_{\gamma; O})^{r + \frac{\gamma}{2-\bar{\beta}}}  \left( \left(1 \vee \left(\sup_{k \leq \frac{\gamma}{\zeta} + 1 } \norm{D^k \sigma_a}_\infty  \right) \right) \$ \CW_a - \CW_b \$_{\gamma,0; T_\delta} +  \sup_{k \leq \frac{\gamma}{\zeta}}\norm{D^k(\sigma_a - \sigma_b)}_{\infty} \right) \, . 
    \end{equs}
    So perhaps choosing the prefactor $\epsilon > 0$ for $T_\delta$  even smaller we can absorb $\$ \CW_a - \CW_b \$ $ into the left-hand-side and arrive at 
    \begin{equ}
    \$ \CW_a - \CW_b \$_{\gamma, 0; T} \leq \frac{\tilde{\delta}}{1-\tilde{\delta}} \left(1 \vee \left(\sup_{k \leq \frac{\gamma}{\zeta} + 1 } \norm{D^k \sigma}_\infty  \right) \right)^{-1} \left( \sup_{k \leq \frac{\gamma}{\zeta}}\norm{D^k(\sigma_a - \sigma_b)}_{\infty} \right) \, . 
    \end{equ}
    Finally with the elementary inequality 
    \begin{equ}
    \sup_{k \leq \frac{\gamma}{\zeta}}\norm{D^k(\sigma_a - \sigma_b)}_{\infty} \leq \sup_{k \leq \frac{\gamma}{\zeta} + 1 }\norm{D^k \sigma}_\infty |a-b| \, , 
    \end{equ}
    and choosing $\tilde{\delta} \in (0, \frac{\delta}{1+\delta} \wedge \frac{1}{2})$ yields 
    \begin{equs}
    \sup_{t\in [0,T]} \norm{w_a(t, \cdot) - w_b(t, \cdot)}_\infty &\leq \$ \CW_a - \CW_b \$_{\gamma, 0; T} \leq \delta |a-b| \, . 
    \end{equs}
\end{proof}
As an immediate corollary, these stability properties transfer to the flow $\Phi$.

\begin{proof}[of Lemma \ref{lem:flow_stability}]
    Since $\Phi(z,a) - a = w_a(z)$, \eref{eq:flow_stability} follows directly by \eref{eq:control_for_w1}. Also it is immediate that due to 
    \begin{equ}
    \Phi(z,a) - \Phi(z,b) = (a-b) + w_a(z) - w_b(z) \, , 
    \end{equ}
    inequality \eref{eq:flow_stability_derivative} follows by \eref{eq:control_for_w2}. The fact that $ a \mapsto \Phi(z, a)$ is differentiable follows analogously to \cite[Theorem 3.4.4]{henry2006geometric} (since the non-linearity $F$ is qualitatively smooth and bounded for finite regularizations). 
\end{proof}

\begin{comment}
    The aim is to emphasize that this flow $\Phi$ considered therein is the correct object to work with, except that this is already sufficient to conclude the a priori bound, without having to go through the trouble of introducing another auxiliary field $v$, necessitating properties of  $\Phi$ that go beyond just stability. 
\end{comment}

\begin{proof}[of Proposition \ref{prop:one_time_estimate}]
We drop the $\eps$ subscript, keeping in mind that the estimates are uniform in $\eps>0$. We set
\[ a_{\pm} (t) := \pm \| u_0 \|_{\infty} e^{- \frac{m^2}{1 + \delta} t},
\]
and define the moving barriers
\[ B_{\pm} (t, x) := \Phi ((t, x), a_{\pm} (t)) . \]
Since $a_{\pm}$ is only a function of time,
\begin{eqnarray*}
  \partial_t B_{\pm} (t, x) & = & \partial_t \Phi ((t, x), a_{\pm} (t)) +
  \partial_a \Phi ((t, x), a_{\pm} (t)) a_{\pm}' (t)\\
  \Delta B_{\pm} (t, x) & = & \Delta \Phi ((t, x), a_{\pm} (t)),
\end{eqnarray*}
and so one obtains 
\begin{eqnarray*}
  (\partial_t - \Delta + m^2) B_{\pm} (t, x) & = & [(\partial_t
  - \Delta + m^2) \Phi] ((t, x), a_{\pm} (t)) + \partial_a \Phi ((t, x),
  a_{\pm} (t)) a_{\pm}' (t)\\
  & = & F (t, x, \Phi ((t, x), a_{\pm} (t))) + m^2 a_{\pm} (t) + \partial_a
  \Phi ((t, x), a_{\pm} (t)) a_{\pm}' (t)\\
  & = & F (t, x, B_{\pm} (t, x)) + m^2 a_{\pm} (t) + \partial_a \Phi ((t, x),
  a_{\pm} (t)) a_{\pm}' (t) \, . 
\end{eqnarray*}
Since $a'_{\pm} (t) = - \frac{m^2}{1 + \delta} a_{\pm} (t)$ we conclude that
\[ (\partial_t - \Delta + m^2) B_{\pm} ((t, x), a_{\pm} (t)) - F (t, x,
   B_{\pm} (t, x)) = m^2 a_{\pm} (t)  \left( 1 - \frac{\partial_a \Phi ((t,
   x), a_{\pm} (t))}{1 + \delta} \right) . \]
By \eref{eq:flow_stability_derivative}  the term in brackets is non-negative and since $a_+ \geqslant 0$ and $a_- \leqslant
0$ this implies that $B_+$ is a supersolution and $B_-$ is a subsolution of \eref{eq:gPAM}
. Since at $t = 0$
\[ B_- (0, x) = - \| u_0 \|_{\infty} \leqslant u (0, x) \leqslant \| u_0
   \|_{\infty} \leqslant B_+ (0, x), \]
and $F(t,x,\, \cdot \,)$ is smooth, the comparison principle for semilinear parabolic PDEs yields
\[ B_- (t, x) \leqslant u (t, x) \leqslant B_+ (t, x) \qquad \forall t \in [0,
   T_{\delta}] . \]
Finally, by \eref{eq:flow_stability} one has \[|B_\pm(t,x)-a_\pm(t)|=|\Phi((t,x),a_\pm(t))-a_\pm(t)|\leqslant \delta, \] which implies that 
\[ a_- (t) - \delta \leqslant u (t, x) \leqslant a_+ (t) + \delta \qquad
   \forall t \in [0, T_{\delta}] , \]
and recalling the definition of $a_\pm$ one concludes the result.
\end{proof}
\begin{comment}
\begin{red}
\begin{lemma}[Comparison principle] \label{lem:comparison_principle}
Let $u,v$ be solutions to \eref{eq:gPAM}.  Then for $u_0, v_0 \in L^\infty$ it holds  
\begin{equ}
    u_0 \leq v_0 \, \implies u(t, \cdot) \leq v(t, \cdot) \, . 
\end{equ}
\end{lemma} 

\begin{proof}
    Define the field $w := u - v$. One computes for any spacetime point $z$, utilizing crucially that the renormalized nonlinearity $F$ is a local functional of the solution $u$ that 
    \begin{equs}
        (\partial_t - \Delta) w(z) &= F(z, u(z)) - F(z,v(z))  = \int_0^1 \frac{\dd }{\dd \theta }(F(z, v(z) + \theta(u(z)- v(z)) )  \dd \theta \\ 
        &= w(z) \int_0^1 \partial_{\mathsf{u}} F (z, v(z) + \theta(u(z) - v(z))) \dd \theta =: w(z) c(z) \, . 
    \end{equs}
    Here, $\partial_{\mathsf{u}}F$ denotes the derivative of $F$ with respect to the field variable. This newly introduced field $c(z)$ has the property of being smooth and bounded as we work at some finite regularization, hence the classical maximum principle for parabolic PDE yields that as $w_0 = u_0 - v_0 \leq 0 $, that $w(t) \leq 0 $. 
\end{proof}
\end{red}
\end{comment}
Now equipped with the short time estimate \eref{eq:massive_one_time_estimate}, we can close the argument for Theorem \ref{thm:apriori_bound}. 

\begin{proof}[of Theorem \ref{thm:apriori_bound} ]
In the massless case $m^2 = 0 $, we just iterate and reapply Propositon \ref{prop:one_time_estimate}, since the time $T_\delta$ is \emph{independent} of the size of the initial condition. Hence as we iterate at most $\asymp  \frac{T}{T_\delta}$ times, the result follows by the explicit form of $T_\delta$ from \eref{eq:local_time_T}. More precisely, accounting for the restart times and $ \$ Z \$_{\gamma; [s-1, s+2]} \leq \$ Z \$_{\gamma; [-1, T+2]} $ for any $s \in (0,T)$, we arrive at 
    \begin{equ}
    \norm{u^\eps (T, \cdot)}_\infty  \leq \norm{u_0}_\infty + \frac{\delta}{\epsilon} T  \left( (1\vee \$ Z^\eps  \$_{\gamma, [-1, T+2]})^{r + \frac{\gamma}{2-\bar{\beta}}} \sup_{k \leq \frac{\gamma}{2-\bar{\beta}} + 1 } \norm{D^k \sigma}_\infty \right)^{ \frac{2}{2-\bar{\beta}}}\, . 
    \end{equ}
Collecting constants leads to \eref{eq:apriori_gPAM}. In the massive case $m^2 \geq 0$, we follow closely \cite[Section 7]{chandra2026priori}, that is, we prove the estimate in three steps. 
    \begin{enumerate}
        \item For $n \in \N_0$ we aim to get a recursive estimate on the sizes $\norm{u(n)}_\infty$. To this end, we let 
        \begin{equ}
         T_{\delta, n} := \epsilon \left( (1\vee \$ Z \$_{\gamma, [n-1, n+3] \times \T^d })^{r + \frac{\gamma}{2-\bar{\beta}}} \sup_{k \leq \frac{\gamma}{2-\bar{\beta}} + 1 } \norm{D^k \sigma}_\infty \right)^{ - \frac{2}{2-\bar{\beta}}} \, , 
        \end{equ}
        and we define by $N_n$ an upper bound on the number of iterations needed to transfer from time $n$ to $n+1$: 
            \begin{equ}
                N_n := 1 + T^{-1}_{\delta, n} \geq \left\lceil \frac{1}{T_{\delta,n}} \right\rceil  \, . 
            \end{equ}
        We can now iterate \eref{eq:massive_one_time_estimate} on intermediate times $t \in [n,n+1]$ to get 
        \begin{equs}
        \norm{u(n+1, \cdot)}_\infty \leq \delta(1+e^{-\frac{m^2}{1+\delta}T_{\delta, n}} + e^{-\frac{m^2}{1+\delta} 2T_{\delta, n}} \dots) + e^{-\frac{m^2}{1+\delta}} \norm{u(n, \cdot)}_\infty \leq \delta N_n + e^{-\frac{m^2}{1+\delta}} \norm{u(n, \cdot)}_\infty  \, . 
        \end{equs}
        \item Now for $p \in \N$, by taking expectations with respect to the randomness of the driving noise $\xi_i$ we get by the Minkowski inequality
        \begin{equ}
        \left( \E \norm{u(n+1)}_\infty^p \right)^{\frac{1}{p}} \leq \delta \norm{N_n}_{L^p(\Omega, \P)} + e^{-\frac{m^2}{1+\delta}} \left( \E \norm{u(n)}_\infty^p \right)^{\frac{1}{p}} \, . 
        \end{equ}
        Consequently, recursively iterating: 
        \begin{equs}
        \left( \E \norm{u(n)}_\infty^p \right)^{\frac{1}{p}} \leq e^{-\frac{m^2}{1+\delta}n} \norm{u_0}_\infty + \frac{\delta }{1-e^{-\frac{m^2}{1+\delta}}} \sup_{j \leq n-1 } \norm{N_j}_{L^p(\Omega, \P)} \, . 
        \end{equs}
        \item For any intermediate time $t \in [n,n+1]$ we can bound again as in the first step: 
        \begin{equ}
        \norm{u(t)}_\infty \leq \delta N_n + e^{-\frac{m^2}{1+\delta}(t-n)} \norm{u(n)}_\infty \, . 
        \end{equ}
        Hence  in combination with the estimate on integer times we arrive for $t \in \R_{\geq0}$ at 
        \begin{equs}
        \left( \E[ \norm{u(t)}_\infty^p ] \right)^{\frac{1}{p}}  \leq e^{-\frac{m^2}{1+\delta}t} \norm{u_0}_\infty + \frac{\delta}{1-e^{-\frac{m^2}{1+\delta}}} \sup_{n \in \N_0 } \norm{N_n}_{L^p(\Omega, \P)} \, . 
        \end{equs}
        
    \end{enumerate}
     Summarizing the constants yields \eref{eq:apriori_mSG}.
\end{proof}

\appendix

\section{Technical regularity structure lemma}
\begin{lemma}\label{lem:composition_bound}
    Let $F : \R \to \R$ be smooth and $f \in \CD^\gamma(V)$ such that $f: \R^d \to V$ where $V$ is a function like sector with $\zeta := \min\{ \alpha >0 : V_\alpha \neq 0 \}$ and $\gamma >  \zeta > 0$. Then $\hat{F}_\gamma(f) \in \CD^\gamma(V)$ and we have the explicit bounds for compact $\fK \subset \R^d$
    \begin{equs} \label{eq:explicit_bounds_composition}
    \$ \hat{F}_\gamma(f) \$_{\gamma; \fK} &\lesssim \left(\sup_{k \leq \frac{\gamma}{\zeta}} \norm{D^k F}_\infty  \right) (1 \vee \$Z \$_{\gamma; \fK})^{\frac{\gamma}{\zeta}}(1 \vee \$ f \$_{\gamma; \fK})^{\frac{\gamma}{\zeta}}\, \\ \label{eq:explicit_bounds_composition_difference}
    \$ \hat{F}_\gamma(f) - \hat{F}_\gamma(g) \$_{\gamma; \fK} &\lesssim \left(\sup_{k \leq \frac{\gamma}{\zeta} + 1 } \norm{D^k F}_\infty  \right)  (1 \vee \$Z \$_{\gamma; \fK})^{\frac{\gamma}{\zeta}}(1 \vee ( \$ f \$_{\gamma; \fK} + \$ g \$_{\gamma;\fK}) )^{\frac{\gamma}{\zeta}} \$ f - g \$_{\gamma; \fK} \, . 
    \end{equs}
    Consequently, we have for $F,G: \R \to \R$ the bound and 
    \begin{equs}\label{eq:bound_difference_of_difference}
    & \$ \hat{F}_\gamma(f) - \hat{G}_\gamma(g) \$_{\gamma; \fK} \\ \lesssim & (1 \vee \$ Z \$_{\gamma; \fK})^{\frac{\gamma}{\zeta}} (1 \vee (\$ f \$_{\gamma; \fK} + \$ g \$_{\gamma; \fK}))^{\frac{\gamma}{\zeta}} \left( (1 \vee \left(\sup_{k \leq \frac{\gamma}{\zeta} + 1 } \norm{D^k F}_\infty  \right)) \$ f- g \$_{\gamma; \fK} +  \sup_{k \leq \frac{\gamma}{\zeta}}\norm{D^k(F- G)}_{\infty} \right)  \, . 
    \end{equs}
    Analougous bounds hold for when considering weighted modelled distribution spaces $\CD^{\gamma,\eta}$ with $ \gamma \geq \eta  $. 
\end{lemma}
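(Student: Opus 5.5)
The plan is to redo the proof of \cite[Theorem 4.16]{hairer2014theory} while tracking every constant explicitly. Recall that for $f$ taking values in a function-like sector $V$ we write $f = \bar f \1 + \tilde f$, with $\bar f = \langle \1, f\rangle$ and $\tilde f$ taking values in $\bigoplus_{\alpha \ge \zeta} V_\alpha$. The lift is
\begin{equ}
\hat F_\gamma(f) = \sum_{k \le \gamma/\zeta} \frac{D^k F(\bar f)}{k!} \CQ_{<\gamma} \tilde f^{\star k}\,.
\end{equ}
Only powers $k \le \gamma/\zeta$ survive the projection $\CQ_{<\gamma}$, since $\tilde f^{\star k}$ has homogeneity at least $k\zeta$. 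This is where the restriction $k \le \gamma/\zeta$ on the derivatives comes from.

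\textbf{Pointwise bound.} For each $\alpha<\gamma$, $|\hat F_\gamma(f)(x)|_\alpha$ is a finite sum of terms of the form $|D^kF|_\infty$ times products of at most $\gamma/\zeta$ factors $|\tilde f(x)|_{\beta_j}$. Hence it is bounded by $\sup_{k\le \gamma/\zeta}\norm{D^kF}_\infty (1\vee \$f\$)^{\gamma/\zeta}$.

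\textbf{Translation part.} For $x,y \in \fK$ we expand $\Gamma_{xy}\hat F_\gamma(f)(y) - \hat F_\gamma(f)(x)$ exactly as in Hairer's proof. There are two ingredients:
\begin{itemize}
\item a Taylor expansion of $D^kF$ between $\bar f(y)$ and $\bar f(x)$;
\item the multiplicativity $\Gamma_{xy}(\tilde f(y)^{\star k}) = (\Gamma_{xy}\tilde f(y))^{\star k}$, with $\Gamma_{xy}\tilde f(y) = \tilde f(x) + (\bar f(x)-\bar f(y))\1 + R_{xy}$, where $|R_{xy}|_\beta \lesssim \$f\$\,|x-y|^{\gamma-\beta}$.
\end{itemize}
We also need $|\bar f(x)-\bar f(y)| \lesssim \$f\$(1\vee\$Z\$)|x-y|^{\zeta}$. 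This follows from $\bar f(x) - \bar f(y) = \langle \1, f(x) - \Gamma_{xy}f(y)\rangle + \langle \1, (\Gamma_{xy}-1)\tilde f(y)\rangle$ together with $\|\Gamma_{xy}\tau\|_0 \le \$Z\$|x-y|^{|\tau|}$.

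Every remainder term then has order $|x-y|^{\gamma-\alpha}$. Each term is a product of at most $\gamma/\zeta$ factors; every factor carries one power of $\$f\$$ and at most one of $\$Z\$$, and the prefactor is some $D^kF$ with $k\le\gamma/\zeta$. The Taylor remainder of order $\lceil\gamma/\zeta\rceil$ needs one derivative of order $\lceil\gamma/\zeta\rceil$; we absorb this into the convention $k \le \gamma/\zeta$, read as "up to the integer part plus rounding". This gives \eref{eq:explicit_bounds_composition}.

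\textbf{Difference bound \eref{eq:explicit_bounds_composition_difference}.} Write
\begin{equ}
\hat F_\gamma(f) - \hat F_\gamma(g) = \int_0^1 \frac{\dd}{\dd\theta}\hat F_\gamma\bigl(g+\theta(f-g)\bigr)\dd\theta = \int_0^1 \widehat{DF}_\gamma\bigl(g + \theta(f-g)\bigr)\star(f-g)\,\dd\theta\,,
\end{equ}
which is the identity $\dd\hat F_\gamma(h)[k] = \CQ_{<\gamma}(\widehat{DF}_\gamma(h)\star k)$ valid in polynomial-like sectors. We apply the previous bound to $DF$, which costs one extra derivative, and to $h_\theta = g+\theta(f-g)$, whose norm is bounded by $\$f\$+\$g\$$. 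The product estimate in $\CD^\gamma$ for function-like sectors then bounds the $\star$-product by the product of the norms. Keeping the powers of $\$Z\$$ at most $\gamma/\zeta$ here takes some care: I would count factors directly rather than multiply two generic estimates.

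\textbf{Mixed bound and expected obstacle.} The bound \eref{eq:bound_difference_of_difference} follows by splitting
\begin{equ}
\hat F_\gamma(f)-\hat G_\gamma(g) = \bigl(\hat F_\gamma(f) - \hat F_\gamma(g)\bigr) + \widehat{(F-G)}_\gamma(g)\,,
\end{equ}
using linearity of $F\mapsto\hat F_\gamma$. The first term is handled by \eref{eq:explicit_bounds_composition_difference}, the second by \eref{eq:explicit_bounds_composition} applied to $F-G$. The weighted $\CD^{\gamma,\eta}$ versions follow the same argument, with the usual distinction between near-diagonal and far-from-the-boundary pairs $x,y$, as in \cite[Section 6]{hairer2014theory}.

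The main obstacle is the bookkeeping in the translation estimate. One has to check that the exponents of $\$Z\$$ and $\$f\$$ are each at most $\gamma/\zeta$, not $2\gamma/\zeta$, since products of $\Gamma$-expansions can compound. The first thing I would try is to group the terms by total homogeneity, so that each factor contributes at most one power of each norm.
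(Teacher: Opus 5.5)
Your plan is essentially the paper's own argument. The paper's proof consists only of redoing \cite[Theorem 4.16]{hairer2014theory} (respectively \cite[Proposition 6.13]{hairer2014theory}) line by line with constants tracked, and your derivative identity for the difference together with the splitting $\hat F_\gamma(f)-\hat G_\gamma(g)=(\hat F_\gamma(f)-\hat F_\gamma(g))+\widehat{(F-G)}_\gamma(g)$ is the natural way to make that explicit. Your caveat about the derivative count is correct and is an imprecision of the statement, not a gap in your approach: when $\gamma/\zeta\notin\N$ one needs $D^{\lfloor\gamma/\zeta\rfloor+1}F$, and bounding the Taylor remainder fractionally by $|\bar f(x)-\bar f(y)|^{\gamma/\zeta-m}$ is what keeps both exponents equal to $\gamma/\zeta$ rather than $\lceil\gamma/\zeta\rceil$.
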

\begin{proof}
    Following line by line the proof in \cite[Theorem 4.16]{hairer2014theory} (respectively \cite[Proposition 6.13]{hairer2014theory}) and keeping track of all the implicit proportionality constants therein yields the bounds for $f,g \in \CD^\gamma$ (respectively in $\CD^{\gamma, \eta}$). 
\end{proof}

%\printbibliography

\paragraph{Acknowledgements} SE is funded by the European Research Council (ERC) under
the European Union's Horizon 2020 research and innovation programme (Grant agreement No.
101045082), and by the Deutsche Forschungsgemeinschaft (DFG, German Research Foundation)
under Germany's Excellence Strategy EXC 2044-390685587, Mathematics Münster: Dynamics-
Geometry-Structure. The authors thank Sophie Mildenberger and Hendrik Weber for helpful discussions. 

\paragraph{AI usage statement.} OpenAI's ChatGPT 5.6 Sol was used during the initial stages of this work. In particular the main idea of having a uniform existence time for constant initial data being leveraged into the a priori bound via the comparison principle was revelead during a chat session.  The proof in its final form was developed by the authors, who take full responsibility for all claims and arguments made in the paper.

\bibliographystyle{myalpha} % We choose the "plain" reference style
\bibliography{refs} % Entries are in the refs.bib file

%\printbibliography

\vspace{2em}
\noindent
\small
(S.~Esquivel) \textsc{Institut für Analysis und Numerik, Universität Münster, Münster,  Germany}\\
\hspace*{1.5em}\textit{Email address}: \texttt{salvador.esquivel@uni-muenster.de}

\vspace{2em}
\noindent
(F.~Schwarzfischer) \textsc{Department of Mathematics, Technical University of Munich, Garching b. München,  Germany}\\
\hspace*{1.5em}\textit{Email address}: \texttt{schwarzfischer.felix@web.de}

\end{document}